\newtheorem{thm}{Theorem}[section]
\newtheorem{prop}[thm]{Proposition}
\newtheorem{lem}[thm]{Lemma}
\newtheorem{qst}[thm]{Question}
\newtheorem*{thrm1}{Theorem \ref{T1}}
\newtheorem*{thrm2}{Theorem \ref{T2}}
\theoremstyle{definition}
\newtheorem{eg}[thm]{Example}
\theoremstyle{remark}
\newtheorem{rmk}[thm]{Remark}
\newcommand{\Z}{\mathbb Z}
\renewcommand{\phi}{\varphi}
\renewcommand{\bar}{\protect\overline}
\renewcommand{\i}[1]{\mathfrak{#1}}
\newcommand{\q}{\i{q}}
\renewcommand{\a}{\i{a}}
\renewcommand{\b}{\i{b}}
\newcommand{\x}{\mathbf x}
\newcommand{\dd}{\mathbf d}
\newcommand{\depth}{\mathop{\mathrm{depth}}\nolimits}
\newcommand{\codim}{\mathop{\mathrm{codim}}\nolimits}
\newcommand{\reg}{\mathop{\mathrm{reg}}\nolimits}
\newcommand{\pd}{\mathop{\mathrm{pd}}\nolimits}
\newcommand{\tor}{\mathop{\mathrm{Tor}}\nolimits}
\title[A Polynomial Bound on Regularity]{A Polynomial Bound on the Regularity of an Ideal in Terms of Half of the Syzygies}
\author{Jason M\lowercase{c}Cullough}
\address{Department of Mathematics, Univeristy of California, Riverside, 900 University Ave., Riverside, CA 92521}
\email{jmccullo@math.ucr.edu}
\subjclass[2010]{Primary: 13D02; Secondary: 13D07, 13F20}
\keywords{regularity, Betti numbers, resolution}
\begin{document}

\begin{abstract} Let $K$ be a field and let $S = K[x_1,\ldots,x_n]$ be a polynomial ring.  Consider a homogenous ideal $I \subset S$.  Let $t_i$ denote $\reg(\tor_i^S(S/I,K))$, the maximal degree of an $i$th syzygy of $S/I$.  We prove bounds on the numbers $t_i$ for $i > \lceil \frac{n}{2} \rceil$ purely in terms of the previous $t_i$.  As a result, we give bounds on the regularity of $S/I$ in terms of as few as half of the numbers $t_i$.  We also prove related bounds for arbitrary modules.  These bounds are often much smaller than the known doubly exponential bound on regularity purely in terms of $t_1$.  
\end{abstract}
\maketitle

\section{Introduction}

Given a homogeneous ideal $I$ of $S = K[x_1,\ldots,x_n]$, it is natural to seek bounds on and relations among the degrees of the syzygies of $S/I$.  Doing so can yield interesting bounds on the regularity of $S/I$.   We set $t_i = \reg(\tor_i^S(S/I,K))$.  Then $t_i$ is the maximal degree of a minimal $i$th syzygy of $S/I$.  There are several bounds on the regularity of $S/I$ (or equivalently, on $I$), purely in terms of the degrees of the generators of $I$.  Note that the maximal degree of a minimal generator of $I$ is just $t_1(S/I)$.  All of these bounds are doubly exponential in terms of $t_1(S/I)$.  Examples of Mayr and Meyer \cite{MM} show that we cannot avoid this doubly exponential behavior.  It seems reasonable, however, that given more information about the resolution and the degrees of the syzygies of $S/I$, better bounds should be possible.  Indeed, Engheta asks in \cite{EnghetaT} if there is a polynomial bound on the regularity of an ideal if, for some $k \ge 2$, the degrees of the minimal generators of the first 
$k$ syzygies are known as well?  We give two results of this flavor.

\begin{thrm2}
Let $I \subset S = K[x_1,\ldots,x_n]$ be a homogeneous ideal.  Set $h = \lceil \frac{n}{2} \rceil$ and let $t_i = t_i(S/I)$.  Then
\[\reg(S/I) \le \sum_{i = 1}^h t_i + \frac{\prod_{i = 1}^h t_i}{(h-1)!}.\]
 \end{thrm2}

Hence we achieve a bound on the regularity of $S/I$ in terms of only half of the degrees of the syzygies.  Note that this shows that if the numbers $t_i$ are not doubly exponential in terms of $t_1$ in the first half of the resolution, they cannot be doubly exponential in the second half. 

We also prove the following bound, which shows that the final degree jump in the resolution of $S/I$ cannot be large relative to the preceding jumps.
\begin{thrm1} Let $I \subset S$ be a homogenous ideal.  Set $p = \pd(S/I)$ and $t_i = t_i(S/I)$.  Then
\[t_p \le \max\{ t_i + t_{p-i}\,|\, i = 1,\ldots,p-1\}.\]
In particular,
\[\reg(S/I) \le \max\{t_i + t_{p-i} - p\,|\, i = 1,\ldots,p-1\}.\]
\end{thrm1}
\noindent For $n \le 3$, this recovers the low-dimensional case of a result of Eisenbud-Huneke-Ulrich in \cite{EHU}.

 These results reveal interesting restrictions on the possible Betti diagrams of cyclic modules $S/I$.  
 We also give slightly more general bound for arbitrary modules.
 Our methods involve a careful analysis of the Boij-S\"oderberg numerics in the decomposition of the Betti table into a positive rational sum of pure diagrams.

The rest of the paper is structured as follows: In Section 2, we set notation and summarize related results.  In Section 3, we give a short review of Boij-S\"oderberg theory.  Section 4 contains the main inequality we need to prove our main theorems.  We close in Section 5 with some examples and questions about similar bounds.

\section{Background and Terminology}

We now fix notation for the remainder of the paper.  Let $K$ denote an arbitrary field and let $S = K[x_1,\ldots,x_n]$ denote a polynomial ring over $K$.  We consider $S = \bigoplus_{i = 0}^\infty S_i$ as a graded ring with the standard grading.  For $d \in \Z$, let $S(-d)$ denote the rank one free $S$-module whose generator is in degree $d$.  In other words, the $i$th graded part of $S(-d)$ is $S(-d)_i = S_{i-d}$.  Given any finitely generated graded $S$-module $M$, we form the minimal graded free resolution
\[0 \to \bigoplus_j S(-j)^{\beta_{p,j}(M)} \to \cdots \to \bigoplus_j S(-j)^{\beta_{1,j}(M)} \to \bigoplus_j S(-j)^{\beta_{0,j}(M)} \to M \to 0.\]
The integers $\beta_{i,j}(M)$ are called the \textsf{Betti numbers} of $M$ and are commonly displayed in a matrix called the \textsf{Betti diagram}:\\
\[ \begin{tabular}{c|ccccc}
      &0&1&$\cdots$ &$i$&$\cdots$ \\ 
      \hline \text{0:}&$\beta_{0,0}(M)$ &$\beta_{1,1}(M)$ &$\cdots$&$\beta_{i,i}(M)$&$\cdots$\\
       \text{1:}&$\beta_{0,1}(M)$ &$\beta_{1,2}(M)$ &$\cdots$&$\beta_{i,i+1}(M)$&$\cdots$\\
       $\vdots$&$\vdots$ &$\vdots$ &$$&$\vdots$&$$\\
            $j$:&$\beta_{0,j}(M)$ &$\beta_{1,j}(M)$ &$\cdots$&$\beta_{i,i+j}(M)$&$\cdots$\\
                   $\vdots$&$\vdots$ &$\vdots$ &$$&$\vdots$&$$\\
      \end{tabular}
      \]
We then define two measures of the complexity of $M$.  The \textsf{projective dimension} of $M$ is $\pd(M) := \max\{i\,|\,\beta_{i,j}(M) \neq 0 \text{ for some $j$}\}$.  The \textsf{Castelnuovo-Mumford regularity} of $M$ (or just regularity of $M$) is $\reg(M) := \max \{j\,|\,\beta_{i,i+j}(M) \neq 0 \text{ for some $i$}\}.$  We set 
\[t_i(M) := \reg( \tor_i^S(M,K) ) = \max\{j\,|\,\beta_{i,j}(M) \neq 0\}.\]
Note that regularity could be defined as $\reg(M) = \max\{t_i(M) - i\,|\,0 \le i \le \pd(M)\}.$

Eisenbud, Huneke and Ulrich proved the following weak convexity inequality on the degrees of the syzygies of a cyclic module:
\begin{thm}[\cite{EHU} Corollary 4.1]\label{ehu1} If $\dim S/I \le 1$, then
\[t_n(S/I) \le t_i(S/I) + t_{n-i}(S/I).\]
\end{thm}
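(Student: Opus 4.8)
The plan is to transport the statement into Koszul homology, where $t_n$ becomes a socle degree, and then to attack the resulting estimate using the special structure of $S/I$ when $\dim(S/I)\le 1$. \emph{Reductions:} if $\pd(S/I)<n$ then $\beta_{n,j}(S/I)=0$ for every $j$ and there is nothing to prove, so assume $\pd(S/I)=n$, equivalently $\depth(S/I)=0$; and since the $\beta_{i,j}(S/I)$ are unchanged under a flat extension of $K$, assume $K$ infinite. Resolving $K=S/\m$ by the Koszul complex on $x_1,\dots ,x_n$ identifies $\tor^S_n(S/I,K)$ with $(0:_{S/I}\m)$, placed in internal degree shifted up by $n$; hence $t_n(S/I)=n+a$, where $a$ is the top degree of the socle of $S/I$, which equals the top degree of the finite-length module $\H^0_{\m}(S/I)=I^{\mathrm{sat}}/I$. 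Dually, $\ext^n_S(S/I,S)$ is the graded Matlis dual of $\H^0_{\m}(S/I)$ twisted by $n$, so $\operatorname{indeg}\ext^n_S(S/I,S)=-t_n(S/I)$. The goal is thus to bound $a=\reg\bigl(\H^0_{\m}(S/I)\bigr)$ by $t_i(S/I)+t_{n-i}(S/I)-n$.

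\emph{Using $\dim(S/I)\le 1$.} Two features of the hypothesis are available. First, by local duality $\ext^j_S(S/I,S)=0$ for all $j\le n-2$, so $\hom_S(F_\bullet,S)$ is acyclic in cohomological degrees $\le n-2$ for $F_\bullet$ the minimal free resolution of $S/I$; its truncation is a \emph{minimal} resolution of a module $M$ with $\pd_S M\le n-2$ whose Betti numbers are the ``reflected'' ones of $S/I$ --- so $\operatorname{lowdeg}_j(M)=-t_j(S/I)$ and $t_j(M)=-\operatorname{lowdeg}_j(S/I)$ for $j\le n-2$ --- and $M$ fits into $0\to M\to \hom_S(F_{n-1},S)\to \hom_S(F_n,S)\to\ext^n_S(S/I,S)\to 0$. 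Under these identifications the inequality becomes, for $2\le i\le n-2$, a \emph{super}additivity statement for the bottom strand of $M$ measured against the cosyzygy $\ext^n_S(S/I,S)$. Second --- relevant to the boundary cases $i\in\{1,n-1\}$ and to small $n$ --- since $I$ is generated in degrees $\le t_1(S/I)$, the vanishing loci of $I$ and of the ideal generated by $I_{t_1(S/I)}$ coincide; as $\dim(S/I)\le 1$, a generic choice of $n$ forms of degree $t_1(S/I)$ in $I$ then forms a regular sequence, producing a complete intersection $\mathfrak c\subseteq I$ of type $(t_1(S/I),\dots ,t_1(S/I))$, and the surjection $S/\mathfrak c\onto S/I$ with $\reg(S/\mathfrak c)=n(t_1(S/I)-1)$ bounds $\reg(S/I)$, hence $t_n(S/I)$.

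\emph{The main obstacle.} The crude complete-intersection bound only gives $t_n(S/I)\le n\,t_1(S/I)$, which is the asserted inequality at $i=1$ when $S/I$ is a complete intersection but is too weak in general: the sharp bound $t_n\le t_i+t_{n-i}$ is symmetric in $i$ and must be run ``relative to the $i$th syzygy.'' Concretely one would replace $S/I$ by $\Omega^i(S/I)$, for which $t_0=t_i(S/I)$, $t_{n-i}=t_n(S/I)$ and $\pd=n-i$, and repeat the analysis; the difficulty is that this module is no longer cyclic, so the complete-intersection-embedding argument (or a linkage argument, or the $\ext$-vanishing duality) must be carried out for modules while tracking how the generator degrees $t_0$ and the top syzygy degree $t_{n-i}$ interact. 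Controlling that interaction --- equivalently, establishing the superadditivity of the bottom strand mentioned above --- is the heart of the matter, and it is precisely here that the concentration of $\ext^\bullet_S(S/I,S)$ in the top two cohomological degrees, i.e.\ $\dim(S/I)\le 1$, is indispensable.
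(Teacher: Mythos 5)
This statement is not proved in the paper at all --- it is quoted from [EHU, Corollary 4.1] --- so the only thing to assess is whether your argument stands on its own, and it does not: it is a plan rather than a proof. After the (correct) Koszul identification $t_n(S/I)=n+\max\{\text{socle degrees}\}$ and the local-duality reformulation, you arrive exactly at the point where the inequality $t_n\le t_i+t_{n-i}$ would have to be established, and you stop there, yourself labelling the needed ``superadditivity of the bottom strand'' as the heart of the matter without supplying any argument for it. Neither feature you extract from $\dim S/I\le 1$ closes this gap. The Ext-vanishing observation only says the dualized resolution is exact through homological degree $n-2$; it reindexes the Betti numbers but yields no relation among the $t_j$ (and as stated it is slightly off: the module resolved by the truncated dual complex is the image of $\hom_S(F_{n-2},S)\to\hom_S(F_{n-1},S)$, which differs from your $M=\ker(\hom_S(F_{n-1},S)\to\hom_S(F_n,S))$ by $\ext^{n-1}_S(S/I,S)$, nonzero precisely when $\dim S/I=1$). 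The complete-intersection reduction is both too weak (it gives $t_n\le n\,t_1$, not $t_i+t_{n-i}$, as you concede) and not even available when $\dim S/I=1$: then $\grade I=n-1$, so $I$ contains no regular sequence of $n$ forms, and a surjection $S/\mathfrak{c}\onto S/I$ bounds the regularity of the target only in the Artinian case, where regularity is the top nonzero degree. So even the boundary cases $i=1,\,n-1$ are not actually handled.

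What is missing is the substantive input that makes the result true: in [EHU] the inequality is deduced from their theorem on the regularity of Tor (roughly, if $\dim\tor_1^S(M,N)\le 1$ then $\reg\tor_j^S(M,N)\le\reg M+\reg N+j$), applied to $S/I$ together with suitable syzygy modules, so that the top Tor --- your socle --- is controlled by $t_i$ and $t_{n-i}$; equivalently one needs the multiplicative/pairing structure relating $\tor_i$, $\tor_{n-i}$ and $\tor_n$, and it is exactly in that Tor-regularity step that the hypothesis $\dim S/I\le 1$ does real work. Your reductions never engage this mechanism, so the proof as proposed does not go through. (For comparison, the present paper's own machinery --- the Boij--S\"oderberg decomposition behind Theorem~\ref{T1} --- recovers the statement only for $n\le 3$, without any dimension hypothesis, which is precisely why the paper cites [EHU] for the general low-dimensional result rather than reproving it.)
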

They also show that under nice hypotheses on $I$, a similar inequality holds.
\begin{thm}[\cite{EHU} Corollary 4.2]\label{ehu2} If $c = \codim(S/I)$ and $\delta := \dim(S/I) - \depth(S/I) \le 1$, and $I$ contains a regular sequence of forms of degrees $d_1,\ldots,d_q$, then
\[t_{c + \delta}(S/I) \le t_{c + \delta - q}(S/I) + d_1 + \cdots d_q.\]
In particular, if $S/I$ is Cohen-Macaulay of codimension $c$ and $I$ is generated in degree $d$, then
\[t_c \le t_{c-q} + dq.\]
\end{thm}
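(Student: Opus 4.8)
The plan is to prove the general inequality
\[
t_{c+\delta}(S/I)\ \le\ t_{c+\delta-q}(S/I)+\textstyle\sum_{i=1}^{q}d_i ;
\]
the ``in particular'' statement then follows at once, since when $S/I$ is Cohen--Macaulay (so $\delta=0$) of codimension $c$ with $I$ generated in a single degree $d$, a general choice of $q\le c$ generators of $I$ is a regular sequence of forms of degree $d$. Throughout write $M=S/I$, let $G_\bullet$ be the minimal graded free resolution of $M$, and set $J=(f_1,\dots,f_q)$, $R=S/J$, $D=d_1+\cdots+d_q$. By the Auslander--Buchsbaum formula $\pd_S M=n-\depth M=\codim M+\delta=c+\delta=:p$, so $t_{c+\delta}(M)=t_p(M)$ is the largest degree occurring in the resolution of $M$; moreover $q\le \h(I)=c\le p$, so every $t_i$ in sight is meaningful.

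The key observation is the following, valid for \emph{any} finitely generated graded module $W$ with $JW=0$. Since $f_1,\dots,f_q$ is an $S$-regular sequence, the Koszul complex $K_\bullet(f_\bullet;S)$ is a free resolution of $R$, so $\tor^S_i(W,R)$ is computed by $W\otimes_S K_\bullet(f_\bullet;S)=K_\bullet(f_\bullet;W)$; because each $f_i$ annihilates $W$, this Koszul complex has zero differential, and in the top homological degree this gives $\tor^S_q(W,R)\cong W(-D)$. Computing the same $\tor$ from the minimal free resolution $G^W_\bullet$ of $W$ displays $\tor^S_q(W,R)$ as a subquotient of the free module $G^W_q\otimes_S R=\bigoplus_j R(-j)^{\beta^S_{q,j}(W)}$; since a subquotient of a graded module cannot be nonzero in degrees below the initial degree of that module, we obtain
\[
\min\{\,j:\beta^S_{q,j}(W)\ne 0\,\}\ \le\ \operatorname{indeg}\bigl(W(-D)\bigr)\ =\ \operatorname{indeg}(W)+D .
\]

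Consider first the Cohen--Macaulay case $\delta=0$, where $M$ is Cohen--Macaulay of codimension $c=p$. Then the canonical module $\omega=\ext^c_S(M,S(-n))$ satisfies $J\subseteq I\subseteq \operatorname{Ann}(\omega)$, and its minimal free resolution is the $\hom_S(-,S(-n))$-dual of $G_\bullet$ read backwards, so that $\beta^S_{i,j}(\omega)=\beta^S_{c-i,\,n-j}(M)$. Reading off $i=0$ and $i=q$, this says $t_c(M)=n-\operatorname{indeg}(\omega)$ and $t_{c-q}(M)=n-\min\{\,j:\beta^S_{q,j}(\omega)\ne 0\,\}$. Applying the displayed inequality to $W=\omega$ gives $\min\{\,j:\beta^S_{q,j}(\omega)\ne 0\,\}\le \operatorname{indeg}(\omega)+D$, which rearranges to exactly $t_c(M)\le t_{c-q}(M)+D$.

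It remains to treat $\delta=1$, and this is where the real work lies. First one reduces to $\dim M=1$ (hence $\depth M=0$, $p=n$): modding out by a general linear form $\ell$, which may be taken to be a nonzerodivisor on both $M$ and $R$ as long as $\depth M\ge 1$, preserves the Betti numbers of $M$, the value of $\delta$, the difference $p-q$, and a regular sequence of the same degrees in the ideal, while lowering $n$ by one, so finitely many steps bring us to this situation. Now set $M_0=\H^0_\m(M)$ (finite length) and $M'=M/M_0$ (Cohen--Macaulay of dimension $1$, so $\pd_S M'=n-1$); both are annihilated by $J$. From the long exact sequence of $\tor(-,K)$ attached to $0\to M_0\to M\to M'\to 0$ one gets $\tor^S_n(M,K)\cong\tor^S_n(M_0,K)$, hence $t_n(M)=t_n(M_0)$, and the already-proved Cohen--Macaulay case applied to $M_0$ (codimension $n$) gives $t_n(M_0)\le t_{n-q}(M_0)+D$. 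The main obstacle is the final comparison $t_{n-q}(M_0)\le t_{n-q}(M)$: the long exact sequence only yields $t_{n-q}(M_0)\le\max\{t_{n-q}(M),\,t_{n-q+1}(M')\}$, so one must separately bound $t_{n-q+1}(M')$ — presumably by feeding the Cohen--Macaulay estimate into $M'$ and its syzygy modules and relating the result back to $M$ through the same long exact sequence. That bookkeeping, rather than the Cohen--Macaulay estimate itself, is the substantive part of the argument.
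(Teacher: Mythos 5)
This statement is quoted background (Corollary 4.2 of \cite{EHU}); the paper gives no proof of it, so your proposal can only be judged on its own terms, and on those terms it is incomplete. The part you do prove is fine: the Koszul observation that $\tor_q^S(W,S/J)\cong W(-D)$ for any module $W$ killed by $J$, combined with the duality $\beta_{i,j}(\omega)=\beta_{c-i,n-j}(S/I)$ for $S/I$ Cohen--Macaulay, does yield $t_c\le t_{c-q}+D$ cleanly, and this is a legitimately different (and pleasantly short) route to the $\delta=0$ half than the $\ext$/local-duality arguments in \cite{EHU}. (Two small repairs: for the ``in particular'' clause you need $q$ general $K$-linear combinations of the degree-$d$ generators over an infinite field, not ``a general choice of $q$ generators''; and to apply the $\delta=0$ estimate later to $M_0=\H^0_{\m}(M)$ you must state it for arbitrary Cohen--Macaulay modules annihilated by $J$, not just cyclic ones --- your argument does extend, but as written it is phrased only for $S/I$.)

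The genuine gap is the $\delta=1$ case, which is half of the theorem and which you yourself flag as unfinished. After reducing to $\dim M=1$, $\depth M=0$ and writing $0\to M_0\to M\to M'\to 0$, you get $t_n(M)=t_n(M_0)\le t_{n-q}(M_0)+D$, and everything hinges on comparing $t_{n-q}(M_0)$ with $t_{n-q}(M)$. The long exact sequence only gives $t_{n-q}(M_0)\le\max\{t_{n-q}(M),\,t_{n-q+1}(M')\}$, and your proposed fix --- bound $t_{n-q+1}(M')$ by ``feeding the Cohen--Macaulay estimate into $M'$'' and the same long exact sequence --- does not close: the Cohen--Macaulay estimate for $M'$ relates $t_{n-1}(M')$ to $t_{n-1-q}(M')$, not $t_{n-q+1}(M')$ to anything useful, while the long exact sequence at spot $n-q+1$ gives $t_{n-q+1}(M')\le\max\{t_{n-q+1}(M),\,t_{n-q}(M_0)\}$, which is circular and in any case drags in $t_{n-q+1}(M)$, a quantity that is not allowed to appear in the target inequality $t_{c+\delta}\le t_{c+\delta-q}+D$. (Only the case $q=1$ closes, since then $\tor_{n-q+1}^S(M',K)=\tor_n^S(M',K)=0$.) So for $q\ge 2$ a new idea is needed --- in \cite{EHU} this is supplied by their results bounding the regularity of $\ext$ modules, not by Tor bookkeeping --- and without it the $\delta=1$ statement remains unproved in your write-up.
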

However, if $S/I$ is not Cohen-Macaulay, the resolution of $S/I$ may not be so well-behaved.  The authors observed that their hypotheses on $S/I$ in Theorem~\ref{ehu2} are necessary in light of an example by Caviglia \cite{Caviglia}.  He defined a family of three-generated ideals with quadratically growing regularity relative to the degrees of the generators.  In fact, he defined ideals $I_r$ in $S = K[x_1,x_2,x_3,x_4]$ for $n \ge 2$ with $t_1(S/I_r) = r$ and $t_2(S/I_r) = r^2$, so that $\reg(S/I) = r^2 - 2 = t_1(S/I)^2 - 2$.  These ideals have codimension $2$ and depth $0$, so the result above does not apply.  Whether Theorem~\ref{ehu1} holds in greater generality is less clear and we address this question in Section 5.

The most general result bounding regularity in terms of the degrees of some of the syzygies is the following result, due in characteristic 0 to Galligo \cite{Galligo1}, \cite{Galligo2} and Giusti \cite{Giusti}, and later in all characteristics by Caviglia and Sbarra \cite{CS}.
\begin{thm}\label{Texp} 
Let $I \subset K[x_1,\ldots,x_n] = S$ be an ideal generated in degree $\le d$.  (So that $t_1(S/I) \le d$.)  Then
\[ \reg(I) \le (2d)^{2^{n-2}}.\]
\end{thm}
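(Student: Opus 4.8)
The plan is to reduce to the case of a generic initial ideal and then induct on the number of variables, with the controlled growth of Hilbert functions supplying---and accounting for---the doubly exponential bound. Since Betti numbers, and hence regularity, are unaffected by extension of the base field, we may assume $K$ is infinite. Let $J = \mathrm{gin}(I)$ denote the generic initial ideal of $I$ with respect to the degree reverse lexicographic order. By the theorem of Bayer and Stillman, which holds in every characteristic, $\reg(I) = \reg(J)$, and since $\reg(I) = \reg(S/I) + 1$ it suffices to bound $\reg(S/J)$; the off-by-one is harmlessly absorbed into the generous final bound. The ideal $J$ is monomial and Borel-fixed, and these two properties pass to every colon ideal $J : x_n^k$ and to the images of the $J : x_n^k$ in $\bar S = K[x_1,\dots,x_{n-1}]$, which is precisely what makes a reduction in the number of variables possible.

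For the inductive step I would exploit that $x_n$ is a generic linear form for $J$. Iterating the short exact sequences
\[0 \to \bigl(S/(J:x_n^{k+1})\bigr)(-1) \xto{x_n} S/(J:x_n^k) \to \bar S/\overline{J+(x_n^k)} \to 0\]
until the colon chain stabilizes at $J^{\mathrm{sat}} = J:x_n^N$, on which $x_n$ acts as a nonzerodivisor, bounds $\reg(S/J)$ in terms of the number $N$ of colon steps and the regularities of the $(n-1)$-variable Borel-fixed ideals obtained by restricting the ideals $J:x_n^k$. Each such restricted ideal is generated in degrees at most $e$, where $e$ is the top degree of a minimal generator of $J$, so the inductive hypothesis applied in $\bar S$ bounds each of their regularities by a quantity of order $(2e)^{2^{n-3}}$, and hence $\reg(S/J)$ by a quantity of the same order together with $N$.

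The main obstacle is that neither $e$ nor $N$ is bounded by $d$: the generic initial ideal of an ideal generated in degrees $\le d$ can have minimal generators of very large degree, and by the Mayr--Meyer examples some such blow-up is genuinely unavoidable. Both quantities are nonetheless controlled by the Hilbert function: $S/I$ and $S/J$ share the same Hilbert function, and because $I$ is generated in degrees $\le d$ this function is forced, by Macaulay's bound on the growth of Hilbert functions---equivalently, by Gotzmann's persistence theorem---to agree with the Hilbert polynomial of $S/I$ from some degree $\gamma$ onward; Gotzmann's theorems then bound the generator degrees of $J$, the number $N$, and the regularity of $S/J^{\mathrm{sat}}$ all by $\gamma$. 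A generic hyperplane section keeps the defining ideal generated in degrees $\le d$ while dropping to $n-1$ variables, and its effect on $\gamma$ is, roughly, to square it; carrying out this estimate by induction on $n$, with base case $n=2$ (where any ideal generated in degrees $\le d$ has $\reg(S/I) \le 2d-2$ by a direct Hilbert--Burch computation in two variables), yields $\gamma \le (2d)^{2^{n-2}}$ after the constants are arranged, and therefore $\reg(I) \le (2d)^{2^{n-2}}$. The delicate points are the Bayer--Stillman reduction together with the exactness of the hyperplane-section sequences for the generic choice of $x_n$, and the bookkeeping in the Hilbert-function estimate needed to make the bound collapse to exactly $(2d)^{2^{n-2}}$ rather than to a messier doubly exponential expression.
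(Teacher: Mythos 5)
This theorem is not proved in the paper at all: it is quoted as background, with the proof attributed to Galligo and Giusti in characteristic $0$ and to Caviglia--Sbarra in arbitrary characteristic. So there is no in-paper argument to compare yours against; your sketch has to be measured against those literature proofs, whose general skeleton (reduction to $\mathrm{gin}$ with respect to revlex via Bayer--Stillman, induction on the number of variables through a generic hyperplane section, control of the relevant degrees by Hilbert-function/Gotzmann-type bounds) your outline does follow.

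The genuine gap is that your sketch defers exactly the quantitative heart of the theorem. The assertions that Gotzmann's theorems bound the generator degrees of $J=\mathrm{gin}(I)$, the colon length $N$, and $\reg(S/J^{\mathrm{sat}})$ all by the degree $\gamma$ at which the Hilbert function becomes polynomial, and above all that a generic hyperplane section's ``effect on $\gamma$ is, roughly, to square it'' so that the recursion collapses to $(2d)^{2^{n-2}}$ ``after the constants are arranged,'' are precisely the content of the theorem, and nothing in the proposal derives them. Gotzmann's regularity theorem does bound $\reg$ of the \emph{saturation} by the Gotzmann number of the Hilbert polynomial, but bounding that number for an ideal generated in degrees $\le d$ requires Macaulay's binomial growth estimates, and extracting from them (or from the hyperplane-section exact sequences) a clean squaring recursion $B(n,d)\le B(n-1,d)^2$ with base $B(2,d)=2d$ is where Galligo, Giusti, and Caviglia--Sbarra do all the work; without it you have only the shape of a doubly exponential bound, not the stated one. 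A secondary issue: while $\reg(I)=\reg(\mathrm{gin}(I))$ for revlex does hold in every characteristic, in characteristic $p$ the gin is only $p$-Borel, and several structural facts you lean on (behavior of the colon ideals $J:x_n^k$ and of their restrictions, control of generator degrees of $J$) are exactly the points where the characteristic-free argument of Caviglia--Sbarra has to proceed differently from the classical characteristic-$0$ proofs, so those steps would need separate justification as well.
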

Mayr and Meyer \cite{MM} produced a family of examples of homogeneous ideals $\mathfrak{a}_r \subset K[x_1,\ldots, x_r] $ for which the ideal membership problem had doubly exponential complexity in terms of the degrees of the generators.  Bayer and Stillman \cite{BS} showed that these ideals also had doubly exponential regularity, which was exhibited in the first syzygies of $\mathfrak{a}_r$.  In other words $t_0(\mathfrak{a}_r) = 4$ and $t_1(\mathfrak{a}_r) \ge 2^{2^{(r-2)/10}}$.   Thus we cannot hope to avoid doubly exponential behavior given only the degrees of the generators.  

It is striking, however, that the examples of ``wild'' regularity, such as the Mayr-Meyer ideals, Caviglia's examples, or those in \cite{BMNSSS}, all have large regularity by the first syzygies of $I$ (or second syzygies of $S/I$).  The purpose of this paper is to give a bound on the degrees of the later syzygies in a resolution in terms of the earlier ones.  In turn, this yields interesting regularity bounds for cyclic modules $S/I$.  This serves to give some indication why the examples mentioned have large regularity early in the resolution.  While our bounds are much larger than the bounds of Eisenbud, Huneke and Ulrich and are likely not tight, they hold without any hypotheses on $I$.  And while our result requires more data on the syzygies of $S/I$ than just the degrees of the minimal generators of $I$, it provides much smaller bounds than the doubly exponential bound above.  Our main technical tool in proving these results is the numerics resulting from Boij-S\"oderberg decomposition \cite{BS1}.

In \cite{Erman}, Erman used Boij-S\"oderberg numerics to prove a special case of the Buchsbaum-Eisenbud-Horrocks conjecture on the ranks of free modules appearing in a free resolution.  He used restrictions imposed by the Boij-S\"oderberg decomposition to show that ideals with small regularity relative to the degrees of the generators of $I$ satisfy the conjecture.  While our methods are similar and the ideas here were inspired by Erman's techniques, we do not use his results directly.

\section{Review of Boij-S\"oderberg Theory}

We follow the  notation in \cite{Floystad}.  We say that a sequence $\dd = (d_0,\ldots,d_s) \in \Z^{s+1}$ is a \textsf{degree sequence} (of length $s+1$) if $d_{i-1} < d_{i}$ for $i = 1,\ldots,s$.  Define $\Z^{s+1}_{\deg}$ to be the set of degree sequences of length $s+1$.  Given two degree sequences $\dd,\dd' \in \Z^{s+1}_{\deg}$, we say $\dd \le \dd'$ if $d_i \le d_i'$ for $i = 0,\ldots,s$.  For $\a,\b \in \Z^{s+1}_{\deg}$ with $\a \le \b$, we set $\mathbb{D}(\a,\b) := \{ \dd \in \Z^{s+1}_{\deg}\,|\, \a \le \dd \le \b\}$.  If $\dd = (d_0,\ldots,d_p) \in \Z^{p+1}_{\deg}$ and $s \le p$, then we set $\tau_s(\dd) = (d_0,\ldots,d_s)$.

A graded $S$-module $M$ is called \textsf{pure} of type $\dd = (d_0,\ldots,d_s)$, if 
$\beta_{i,j}(M) \neq 0$ if and only if $j = d_i$ for $i = 0,\ldots,s$.  Hence a pure module has a graded free resolution of the form
\[0 \to S(-d_s)^{\beta_{s,d_s}} \to \cdots \to S(-d_1)^{\beta_{1,d_1}} \to S(-d_0)^{\beta_{0,d_0}} \to M \to 0.\]
In \cite{HK}, Herzog and Kuhl showed that any graded pure Cohen-Macaulay $S$-module has prescribed Betti numbers up to constant multiple.  Each degree sequence $\dd = (d_0,\ldots,d_s)$ then defines a ray in the cone of Betti diagrams and there is a unique point $\bar{\pi}(\dd)$ on this ray with $\beta_{0,d_0}(\bar{\pi}(\dd)) = 1$.  In particular, there are specific formulas, called the \textsf{Herzog-Kuhl equations}, for the Betti numbers appearing in $\bar{\pi}(\dd)$; namely,
\[\beta_i(\dd) := \beta_{i,d_i}(\bar{\pi}(\dd)) = \prod_{\overset{1 \le j \le s}{j \neq i}} \frac{|d_j - d_0|}{|d_j - d_i|}.\]
So for example, we have
\[\bar{\pi}(0,2,3,6) = \begin{tabular}{r|rrrr}
      &0&1&2&3\\ \hline \text{0:}&$1$&\text{.}&\text{.}&\text{.}\\\text{
      1:}&\text{.}&$\frac{9}{2}$&$4$&\text{.}\\\text{2:}&\text{.}&\text{.}&\text{.}&\text{.}\\\text
      {3:}&\text{.}&\text{.}&\text{.}&$\frac{1}{2}$\\\end{tabular}
\]
We will use $\beta_i(\dd)$ to denote the above formula even when the integers $d_j$ do not necessarily form a strictly increasing sequence.

Now let $M$ be a graded $S$-module.  Set $p = \pd(M)$ and $c = \codim(M)$.  For $i = 0,\ldots,p$, we define $\bar{d}_i = \max\{ j\,|\,\beta_{i,j}(M) \neq 0\}$ and $\underline{d}_i = \min\{ j \,|\,\beta_{i,j}(M) \neq 0\}$ and then set $\underline{\dd}(M) = (\underline{d}_0,\ldots,\underline{d}_p)$ and $\bar{\dd}(M) = (\bar{d}_0,\ldots,\bar{d}_p)$.

Eisenbud and Schreyer showed that the Betti diagram of any graded Cohen-Macaulay $S$-module $M$ is a positive rational sum of pure diagrams.  Boij and S\"oderberg extended this result to the non-Cohen-Macaulay case.  Here we state a version of their theorem that will suffice for the purposes of this paper.
\begin{thm}(\cite{ES},\cite{BS2}) Let $M$ be a graded $S$-module of projective dimension $p$ and codimension $c$.  Then the Betti diagram $\beta(M)$ can be decomposed as a sum:

\[\beta(M) = \sum_{c \le s \le p} \,\,\sum_{\dd \in \mathbb{D}(\tau_s(\underline{\dd}(M)),\tau_s(\bar{\dd}(M)))} q_{\dd} \beta(\bar{\pi}(\dd)),\]
where the $\q_\dd$ are nonnegative rational numbers.
\end{thm}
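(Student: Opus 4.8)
The plan is to reconstruct the proof of Eisenbud--Schreyer \cite{ES} in the Cohen--Macaulay case and of Boij--S\"oderberg \cite{BS2} in general, via the greedy ``peeling'' algorithm. Since the building blocks $\beta(\bar\pi(\dd))$ are defined purely numerically by the Herzog--Kuhl equations of the previous section, the statement to be established is a numerical one: one must produce \emph{some} identity $\beta(M)=\sum_\dd q_\dd\,\beta(\bar\pi(\dd))$ with $q_\dd\in\Q_{\ge 0}$ and $\dd$ ranging over the prescribed box. In particular it suffices to run an algorithm that peels nonnegative rational multiples of pure diagrams off $\beta(M)$; one needs neither the existence of actual modules with pure resolutions (Eisenbud--Fl\o ystad--Weyman) nor uniqueness of the decomposition, which is why the only external input beyond \cite{ES},\cite{BS2} is the Herzog--Kuhl formula.

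First I would record the elementary constraints on the data of $M$. Minimality of the free resolution forces $\underline d_{i-1}<\underline d_i$ and $\bar d_{i-1}<\bar d_i$ for $1\le i\le p$: a minimal generator of the $i$th free module maps to a nonzero element of $\mathfrak m$ times the $(i-1)$st free module, hence to a combination of generators of strictly smaller degree. Thus $\underline\dd(M)$ and $\bar\dd(M)$ are genuine degree sequences of length $p+1$. Likewise, since $\pd M\ge\codim M$ and the resolution has length exactly $\pd M$, column $c$ of $\beta(M)$ is nonzero; and $\beta_{i,j}(M)=0$ whenever $i\notin[0,p]$ or $j\notin[\underline d_i,\bar d_i]$. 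The core is then the following loop, run on a ``residual diagram'' $\gamma$ initialized to $\beta(M)$. While $\gamma\neq 0$: let $s$ be the largest column index with $\gamma_{s,\cdot}\neq 0$; for $0\le i\le s$ put $e_i:=\min\{j:\gamma_{i,j}\neq 0\}$ and $\mathbf e:=(e_0,\ldots,e_s)$; set $q:=\min_{0\le i\le s}\{\gamma_{i,e_i}/\beta_i(\mathbf e)\}$; replace $\gamma$ by $\gamma-q\,\beta(\bar\pi(\mathbf e))$ and add $q$ to the coefficient of $\mathbf e$. Granting the structural lemma below, every intermediate $\gamma$ satisfies $0\le\gamma\le\beta(M)$ entrywise, each $\mathbf e$ arising is a degree sequence with $c\le s\le p$ and $\tau_s(\underline\dd(M))\le\mathbf e\le\tau_s(\bar\dd(M))$ (the bounds because supports nest, and $s\ge c$ because column $c$ can only be killed by a pure diagram of length $\ge c+1$), the loop terminates, and at termination $\gamma=0$; summing the recorded coefficients then yields the claimed decomposition. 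For $M$ Cohen--Macaulay, $p=c$, only full-length sequences $\mathbf e$ appear, and correctness is exactly the Eisenbud--Schreyer result quoted in Section~2.

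The main obstacle is proving that this loop is well-posed, which is essentially the entire content of Boij--S\"oderberg theory. The \emph{structural lemma} needed is: for every residual $\gamma$ produced by the algorithm, (a) its bottom sequence $\mathbf e$ is strictly increasing, so $\bar\pi(\mathbf e)$ is a legitimate pure diagram; (b) after the maximal subtraction, $\gamma-q\,\beta(\bar\pi(\mathbf e))$ has nonnegative entries, its nonzero columns again form an initial segment, and its own bottom sequence is again strictly increasing; and (c) the subtraction either strictly decreases the number of nonzero entries of $\gamma$ or strictly increases the bottom sequence in the Boij--S\"oderberg partial order on degree sequences of length $\le p+1$. Property (c) forces termination, since the bottom sequence is bounded above by $\bar\dd(M)$ and the support of $\gamma$ is finite. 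The delicate points are that no interior column is emptied (which would create a gap and break the recursion) and that the maximal scalar subtraction never causes one column's minimum to ``overtake'' the next; these follow from the ties built into the Herzog--Kuhl numerics, but verifying them amounts to identifying enough of the linear inequalities cutting out the cone of Betti tables and checking that they persist under the greedy step. Eisenbud--Schreyer carry this out for $p=c$ by a duality pairing between Betti tables and cohomology tables of coherent sheaves on $\P^{n-1}$; the additional input of \cite{BS2} is the case $p>c$, where the algorithm peels off pure diagrams of successively shorter length as columns $p,p-1,\dots$ are exhausted and the partial order in (c) must compare degree sequences of different lengths. This verification is where all the real work lies; everything else in the argument is bookkeeping.
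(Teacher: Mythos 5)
There is a genuine gap: your argument is an accurate description of the greedy peeling strategy, but the ``structural lemma'' on which everything rests is never proved, and you yourself flag its verification as ``where all the real work lies.'' That lemma --- that at every stage the residual diagram's bottom sequence is strictly increasing, that no interior column is emptied prematurely, and that the process terminates at zero --- is not bookkeeping; it is equivalent to the assertion that the Betti diagram of a module satisfies the linear inequalities cutting out the facets of the cone spanned by the pure diagrams $\beta(\bar{\pi}(\dd))$. Eisenbud and Schreyer prove exactly this via the positivity of a pairing between Betti tables and cohomology tables of (supernatural) vector bundles on projective space, and Boij and S\"oderberg extend it to $c < p$, where pure diagrams of intermediate lengths $c \le s \le p$ enter; none of that content appears in your write-up, so what you have is a reduction of the theorem to itself rather than a proof. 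Note also that the paper does not prove this statement at all --- it is quoted from \cite{ES} and \cite{BS2}, and only the existence of some nonnegative rational decomposition within the box $\mathbb{D}(\tau_s(\underline{\dd}(M)),\tau_s(\bar{\dd}(M)))$ is used later --- so a citation is the intended justification; a self-contained proof would require importing the duality argument, not just the Herzog--Kuhl formulas.

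Two smaller inaccuracies: minimality of the resolution gives $\underline{d}_{i-1} < \underline{d}_i$, but it does \emph{not} force $\bar{d}_{i-1} < \bar{d}_i$ (for instance $M = K \oplus S(-10)$ over $K[x,y]$ has $\bar{\dd}(M) = (10,1,2)$), so $\bar{\dd}(M)$ need not be a degree sequence; the box $\mathbb{D}(\tau_s(\underline{\dd}(M)),\tau_s(\bar{\dd}(M)))$ still makes sense, but your justification of the upper bound on the peeled sequences should not lean on that claim. Also, your reason that only $s \ge c$ occurs (``column $c$ can only be killed by a pure diagram of length $\ge c+1$'') is not an argument: the lower bound on the lengths of the pure diagrams appearing is part of the non-Cohen--Macaulay refinement in \cite{BS2} and again requires the cone description, not just the mechanics of the greedy step.
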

A much stronger statement is possible yielding a unique decomposition on the right given a saturated chain of degree sequences between $\tau_s(\underline{\dd}(M))$ to $\tau_s(\bar{\dd}(M))$ for $c \le s \le p$.  This stronger result also provides an algorithm for producing such a decomposition of the Betti diagram of any graded $S$-module.  However, we will not need these stronger statements and refer the interested reader to \cite{BS2}, \cite{BS1}, \cite{ES}.

\section{Main Results}

In this section we show how one can use the Boij-S\"oderberg decomposition of the Betti table of a grade $S$-module to produce bounds on regularity.  We start with a simple lemma.

\begin{lem}\label{L1} Let $\a = (a_0,\ldots,a_s)$ and $\b = (b_0,\ldots,b_s)$ be degree sequences  of length $s+1$.  Suppose $a_0 \ge b_0$, $a_s = b_s$, and $a_i \le b_i$ for all $i = 1,\ldots, s-1$.  Then
\[\beta_s(\a) \le \beta_s(\b).\]
\end{lem}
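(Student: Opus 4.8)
The plan is to compare the two products $\beta_s(\a) = \prod_{j \neq s,\,1 \le j \le s-1} \frac{a_j - a_0}{a_j - a_s}$ ... wait, let me reconsider the indexing carefully. We have $\beta_s(\dd) = \prod_{1 \le j \le s,\, j \neq s} \frac{|d_j - d_0|}{|d_j - d_s|} = \prod_{j=1}^{s-1} \frac{d_j - d_0}{d_s - d_j}$, using that the sequence is increasing so $d_0 < d_j < d_s$ for $1 \le j \le s-1$. So $\beta_s(\a) = \prod_{j=1}^{s-1} \frac{a_j - a_0}{a_s - a_j}$ and similarly for $\b$. The strategy is to show the inequality holds termwise after a suitable pairing, or more simply, to show each factor of $\beta_s(\a)$ is at most the corresponding factor of $\beta_s(\b)$.

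First I would note that $a_s = b_s$, so the denominators are $a_s - a_j$ and $b_s - b_j = a_s - b_j$; since $a_j \le b_j$ we get $a_s - a_j \ge a_s - b_j > 0$, so each denominator for $\a$ is at least the corresponding one for $\b$. For the numerators, $a_j - a_0$ versus $b_j - b_0$: we have $a_j \le b_j$ and $a_0 \ge b_0$, hence $a_j - a_0 \le b_j - b_0$; and both are positive since $a_0 < a_1 \le a_j$ (for $j \ge 1$) and likewise $b_0 < b_j$. Therefore for each $j \in \{1,\ldots,s-1\}$,
\[
\frac{a_j - a_0}{a_s - a_j} \;\le\; \frac{b_j - b_0}{b_s - b_j},
\]
and multiplying these $s-1$ inequalities between positive quantities yields $\beta_s(\a) \le \beta_s(\b)$.

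I do not expect a serious obstacle here; the only point requiring a little care is confirming positivity of all the factors (so that multiplying the componentwise inequalities is legitimate and the absolute values in the Herzog-Kuhl formula can be dropped), which follows immediately from the hypothesis that $\a$ and $\b$ are degree sequences, i.e. strictly increasing. One should also double-check the edge cases $s = 1$ (empty product, $\beta_1(\a) = 1 = \beta_1(\b)$) and $s = 2$ (a single factor), both of which are covered by the argument above. The slight subtlety worth flagging explicitly is that the hypotheses only constrain $a_i$ versus $b_i$ for the \emph{interior} indices $1 \le i \le s-1$ together with the boundary conditions $a_0 \ge b_0$ and $a_s = b_s$, and these are exactly what is used: the interior inequalities control the $j$-indexed factors, while the two boundary conditions control the roles of $d_0$ and $d_s$ uniformly across all factors.
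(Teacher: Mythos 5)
Your argument is correct and is essentially identical to the paper's proof: both compare numerators via $a_i - a_0 \le b_i - b_0$ (from $a_i \le b_i$, $a_0 \ge b_0$) and denominators via $a_s - a_i \ge b_s - b_i$ (from $a_s = b_s$, $a_i \le b_i$), then multiply the factorwise inequalities of positive terms. Your extra remarks on positivity and edge cases are fine but add nothing beyond the paper's reasoning.
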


\begin{proof} Since $a_i \le b_i$, we have $a_s - a_i \ge a_s - b_i = b_s - b_i$ for $i = 1,\ldots, s-1$.  Similarly $a_i - a_0 \le b_i - a_0 \le b_i - b_0$ for $i = 1,\ldots, s-1$.  Therefore 
\[\beta_s(\a) = \prod_{i = 1}^{s-1} \frac{a_i - a_0}{a_s - a_i} \le \prod_{i = 1}^{s-1} \frac{b_i - b_0}{b_s - b_i} = \beta_s(\b).\]
\end{proof}

The next result contains the main idea needed for all of the subsequent bounds.  It follows by noticing that if we fix at least half of the degrees in a degree sequence $\dd$, then the final Betti number in $\beta(\bar{\pi}(\dd))$ tends to $0$ as the regularity increases.

\begin{prop}\label{P1} Let $M$ be a finitely generated graded $S$-module.  Let $p = \pd(M)$, $t_i = t_i(M)$ $\mu(M) = \sum_j \beta_{0,j}$, and $\underline{d}_0 = \min\{j \,|\, \beta_{0,j}(M) \neq 0\}$.  Fix an integer $h < p$ and let $B \ge \max \{t_i - i\,|\, i = 1,\ldots,h\}$.  Suppose for all integers $r$ and $s$ with $h < s \le p$ and $r > B$, we have
\[\beta_s(\underline{d}_0, t_1, t_2, \ldots, t_h, r+h+1, r+h+2,\ldots, r+s) < \frac{1}{\mu(M)}.\]
Then 
 \[\reg(M) \le B.\]
\end{prop}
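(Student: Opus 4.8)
The plan is to argue by contradiction using the Boij--S\"oderberg decomposition. Suppose $\reg(M) > B$; I want to derive a contradiction with the hypothesis on the $\beta_s$ values. Since $\reg(M) = \max\{t_i - i \mid 0 \le i \le p\}$ and $B \ge \max\{t_i - i \mid i = 1,\ldots,h\}$ (and we may assume $t_0 - 0 = \underline{d}_0' \le B$ after absorbing generator degrees appropriately, or handle $i=0$ separately), there must be some index $s$ with $h < s \le p$ for which $t_s - s > B$; pick the largest such $s$, and set $r = t_s - s > B$. The key point is that $\beta_{s, t_s}(M) \ne 0$ is a positive integer, so it is at least $1$.

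Next I would feed the decomposition
\[\beta(M) = \sum_{c \le s' \le p}\ \sum_{\dd \in \mathbb D(\tau_{s'}(\underline{\dd}(M)),\tau_{s'}(\bar{\dd}(M)))} q_{\dd}\,\beta(\bar\pi(\dd))\]
into the $(s, t_s)$ entry. Only summands with $s' \ge s$ contribute in homological degree $s$, and among those only degree sequences $\dd$ with $d_s = t_s$ can contribute to the entry $\beta_{s, t_s}$. For such a $\dd$, the contribution to $\beta_{s,t_s}(M)$ is $q_{\dd}\,\beta_s(\dd)$, where $\beta_s(\dd)$ is the Herzog--Kuhl value (depending only on $d_0,\ldots,d_{s'}$, but I will bound it using only the first $s+1$ coordinates via the product formula, which for $\beta_s$ involves $d_0,\ldots,d_{s'}$ — here I should restrict attention to $\beta_s$ of the truncation, or note that extra later terms only help; this bookkeeping is the fussy part). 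Summing, $1 \le \beta_{s,t_s}(M) = \sum_{\dd} q_{\dd}\beta_s(\dd)$ over the relevant $\dd$'s. Separately, applying the decomposition to the zeroth row gives $\mu(M) = \sum_j \beta_{0,j}(M) \ge \sum_{\dd} q_{\dd}$, the sum over the \emph{same} family if every relevant $\dd$ has $d_0 = \underline{d}_0$; more carefully, $\sum_{\dd} q_{\dd} \le \mu(M)$ since each $\bar\pi(\dd)$ contributes $q_\dd$ to row $0$ (as $\beta_0(\dd) = 1$). Therefore some relevant $\dd$ has $\beta_s(\dd) \ge 1/\mu(M)$.

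Now I use Lemma~\ref{L1} to compare this $\dd$ with the extremal sequence $\dd^* = (\underline d_0, t_1, \ldots, t_h, r+h+1, \ldots, r+s)$. For the relevant $\dd = (d_0,\ldots,d_s,\ldots)$: we have $d_0 \ge \underline d_0$ (the $0$-th coordinate of $\dd^*$); $d_s = t_s = r+s$ (matching the last coordinate of $\dd^*$); for $1 \le i \le h$, $d_i \le \bar d_i = t_i$ (the $i$-th coordinate of $\dd^*$); and for $h < i < s$, since $\dd$ is strictly increasing and ends at $r+s$, we get $d_i \le r+s - (s-i) = r+i$, which is the $i$-th coordinate of $\dd^*$. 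So $\dd \le \dd^*$ in coordinates $1,\ldots,s-1$, with equality at $0$ reversed and equality at $s$ — exactly the hypotheses of Lemma~\ref{L1} applied to the length-$(s+1)$ truncations. Hence $\beta_s(\dd) \le \beta_s(\dd^*) = \beta_s(\underline d_0, t_1,\ldots,t_h, r+h+1,\ldots,r+s)$, which by hypothesis is $< 1/\mu(M)$ since $r > B$ and $h < s \le p$. This contradicts $\beta_s(\dd) \ge 1/\mu(M)$, completing the proof.

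The main obstacle I anticipate is the bookkeeping in the middle step: making precise that in the Boij--S\"oderberg sum, the entry $\beta_{s,t_s}(M)$ really is a sum of $q_{\dd}\beta_s(\tau_s(\dd))$ over sequences $\dd$ (of various lengths $s' \ge s$) whose $s$-th coordinate equals $t_s$, and that Lemma~\ref{L1} (stated for equal-length sequences) applies after truncating to length $s+1$ — one must check that $\beta_s$ of a pure sequence of length $>s+1$ equals $\beta_s$ of its truncation, or otherwise that the truncated product bounds it. The inequality $\sum q_\dd \le \mu(M)$ linking the two rows is the other point needing a clean statement. Everything else is the monotonicity packaged in Lemma~\ref{L1} and the strict-increase constraint on degree sequences.
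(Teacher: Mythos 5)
Your overall strategy is the same as the paper's: assume $\reg(M) > B$, locate an extremal nonzero Betti entry beyond the bound (your maximal $s$ with $t_s - s > B$ is exactly the paper's maximal pair $(i',j')$ with $j'=t_{i'}$), decompose $\beta(M)$ by Boij--S\"oderberg, use that the coefficients $q_{\dd}$ sum to at most $\mu(M)$ over the relevant diagrams, and apply Lemma~\ref{L1} against the extremal sequence $(\underline d_0,t_1,\ldots,t_h,r+h+1,\ldots,r+s)$. Your verification of the hypotheses of Lemma~\ref{L1} (in particular $d_i\le r+i$ for $h<i<s$ from strict increase, and $d_i\le t_i$ for $i\le h$ from the decomposition's constraints) is correct and in fact more explicit than the paper's.

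The genuine gap is precisely the step you flag as ``the fussy part'': pure diagrams of length $s'+1>s+1$ whose degree sequence has $d_s=t_s$. Neither of your proposed fixes works: for such a $\dd$ the Herzog--Kuhl value in position $s$ is $\beta_s(\dd)=\beta_s(\tau_s(\dd))\cdot\prod_{j=s+1}^{s'}\frac{d_j-d_0}{d_j-d_s}$, and each extra factor is strictly greater than $1$, so the truncated value is a \emph{lower} bound for $\beta_s(\dd)$, not an upper bound; ``extra later terms only help'' is false, and the hypothesis of Proposition~\ref{P1} only controls terminal Betti numbers of sequences of the special shape, not these interior values. The correct resolution --- and the one the paper uses --- is already available in your setup: you chose $s$ \emph{maximal} with $t_s-s>B$, but you never exploit this. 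If $\dd$ has length $s'+1$ with $s'>s$ and $d_s=t_s>B+s$, then strict increase forces $d_{s'}\ge t_s+(s'-s)>B+s'$, while the decomposition requires $d_{s'}\le \bar d_{s'}(M)=t_{s'}\le B+s'$ by maximality of $s$, a contradiction. Hence no pure diagram of length greater than $s+1$ contributes to the $(s,t_s)$ entry at all, and the rest of your argument goes through as written. (Your parenthetical about $t_0$ is a fair point, but the paper's own proof makes the same tacit assumption that the regularity is not attained in homological degree $0$, which is automatic in the cyclic case where the proposition is applied.)
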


\begin{proof} Suppose $\reg(S/I) > B$.  Since $B \ge t_i - i$ for all $i = 1,\ldots,h$, there exist integers $i > h$ and $j > B + i$ with $\beta_{i,j}(M) \neq 0$.  Fix $i'$ and then $j'$ maximal among those ordered pairs $(i',j')$ with $i' > h$ and $j' > B + i'$ with $\beta_{i',j'}(M) \neq 0$.  
Now consider the Boij-S\"oderberg decomposition of $\beta(M)$
\[\beta(M) = \sum_{c \le s \le p} \,\,\sum_{\dd \in \mathbb{D}(\tau_s(\underline{\dd}(M)),\tau_s(\bar{\dd}(M)))} q_{\dd} \beta(\bar{\pi}(\dd)).\]
Since $\mu(M) = \sum_j \beta_{0,j}$, it follows that the rational coefficients $q_\dd$ sum to $\mu(M)$; that is,
\[\sum_{c \le s \le p} \,\,\sum_{\dd \in \mathbb{D}(\tau_s(\underline{\dd}(M)),\tau_s(\bar{\dd}(M)))} q_{\dd} = \mu(M).\]

Now consider only those degree sequences $\dd \in \mathbb{D}(\tau_{i'}(\underline{\dd}(M)),\tau_{i'}(\bar{\dd}(M)))$ with $d_{i'} = j'$.  By our choice of $i', j'$, these degree sequences correspond to the only pure diagrams represented in the sum with nonzero entry in the $(i',j')$ coordinate.  By the previous lemma, 
\[\beta_{i'}(\dd) \le \beta_{i'}(d, t_1, t_2, \ldots, t_h, r+h+1, r+h+2,\ldots, r+i') < \frac{1}{\mu(M)}\]
for all such degree sequences $\dd$, where $r = j' - i' > B$.  Hence, 
\begin{align*}
\beta_{i',j'}(M) &= \sum_{\overset{\dd \in \mathbb{D}(\tau_{i'}(\underline{\dd}(M)),\tau_{i'}(\bar{\dd}(M)))}{d_{i'} = j'}} q_{\dd} \beta_{i'}(\dd) \\
&<  \frac{1}{\mu(M)} \sum_{\overset{\dd \in \mathbb{D}(\tau_{i'}(\underline{\dd}(M)),\tau_{i'}(\bar{\dd}(M)))}{d_{i'} = j'}} q_{\dd} \\
& \le \frac{1}{\mu(M)} \mu(M) = 1,
\end{align*}
which is impossible.
\end{proof}

In the following we give two cases where the previous proposition yields interesting bounds on the regularity of cyclic modules.  First we take the case $h = p - 1$.

\begin{lem}\label{L2} Let $I \subset S$.  Set $p = \pd(S/I)$ and $t_i = t_i(S/I)$.  For any integer $r$ with
\[r >  \max\{ t_i + t_{p-i}\,|\, i = 1,\ldots,p-1\},\]
we have that
\[\beta_p(\dd) < 1,\]
for all degree sequences $\dd$ of length $p+1$ with
\[  (0,0,\ldots,0,r) \le \dd \le (0,t_1,t_2,\ldots,t_{p-1}, r).\]
\end{lem}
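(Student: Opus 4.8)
The plan is to fix an arbitrary degree sequence $\dd = (d_0,\ldots,d_p)$ in the stated range, rewrite $\beta_p(\dd)$ using the Herzog--Kuhl equations, apply Lemma~\ref{L1} to bound it by its value at the ``top'' sequence $(0,t_1,\ldots,t_{p-1},r)$, and then use the hypothesis $r > \max\{t_i+t_{p-i}\}$ to force that value strictly below $1$. So the whole argument is: monotonicity reduces the bound to one sequence, and the numerical hypothesis handles that one sequence.

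First I would unwind the condition $(0,0,\ldots,0,r) \le \dd \le (0,t_1,\ldots,t_{p-1},r)$: it says precisely that $d_0 = 0$, $d_p = r$, and $0 \le d_j \le t_j$ for $j = 1,\ldots,p-1$. Since $t_{p-j} \ge 0$, the hypothesis $r > t_j + t_{p-j}$ gives $d_j \le t_j < r$ for each such $j$, so every denominator below is positive. Plugging into the Herzog--Kuhl formula,
\[\beta_p(\dd) = \prod_{j=1}^{p-1}\frac{|d_j - d_0|}{|d_j - d_p|} = \prod_{j=1}^{p-1}\frac{d_j}{r - d_j}.\]
Because $x \mapsto x/(r-x)$ is increasing on $[0,r)$ — which is exactly the monotonicity estimate carried out in the proof of Lemma~\ref{L1}, applied with $\a = \dd$ and $\b = (0,t_1,\ldots,t_{p-1},r)$, the hypotheses $a_0 = b_0$, $a_p = b_p$, $a_j \le b_j$ being immediate — I obtain
\[\beta_p(\dd) \le \prod_{j=1}^{p-1}\frac{t_j}{r - t_j}.\]

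Finally I would note that for each $j = 1,\ldots,p-1$ the hypothesis gives $r - t_j > t_{p-j} \ge 0$, and the $r-t_j$ are positive; multiplying these inequalities and re-indexing $j \mapsto p-j$ yields
\[\prod_{j=1}^{p-1}(r - t_j) > \prod_{j=1}^{p-1} t_{p-j} = \prod_{j=1}^{p-1} t_j,\]
so $\prod_{j=1}^{p-1} t_j \big/ \prod_{j=1}^{p-1}(r - t_j) < 1$, whence $\beta_p(\dd) < 1$ as desired. I do not expect a genuine obstacle here; the only care needed is checking the positivity of the denominators $r - d_j$ and $r - t_j$ (which comes down to $r > t_j + t_{p-j} \ge t_j$), and observing that Lemma~\ref{L1} is being applied to a ``target'' sequence $(0,t_1,\ldots,t_{p-1},r)$ that need not be strictly increasing, so one invokes the inequality chain inside its proof rather than its verbatim statement (or, equivalently, re-proves the one-line monotonicity estimate directly).
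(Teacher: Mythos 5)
Your proposal is correct and follows essentially the same route as the paper: reduce via the monotonicity of Lemma~\ref{L1} to the extremal sequence $(0,t_1,\ldots,t_{p-1},r)$, then use $r - t_i > t_{p-i}$ and re-indexing of the product to conclude $\beta_p < 1$. The extra care you take about positivity of denominators and about the target sequence possibly failing to be strictly increasing matches the paper's own convention of using the Herzog--Kuhl formula for not-necessarily-increasing sequences.
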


\begin{proof}
By Lemma~\ref{L1}, it suffices to check that $\beta_p(0,t_1,t_2,\ldots,t_{p-1},r) < 1$.  Note that $r > t_i + t_{p-i}$ for $i = 1,\ldots,p-1$.  Hence $r - t_i > t_{p-i}$ for $i = 1,\ldots,p-1$.  Therefore
\[\beta_p(0,t_1,t_2,\ldots,t_{p-1},r) = \frac{\prod_{i = 1}^{p-1} t_i}{\prod_{i = 1}^{p-1} (r - t_i)} < \frac{\prod_{i = 1}^{p-1} t_i}{\prod_{i = 1}^{p-1} t_{p - i}} = 1.\]
\end{proof}

We thus have the following bound on the degree of the final syzygies of $S/I$.

\begin{thm}\label{T1} Let $I \subset S$ be a homogenous ideal.  Set $p = \pd(S/I)$ and $t_i = t_i(S/I)$.  Then
\[t_p \le \max\{ t_i + t_{p-i}\,|\, i = 1,\ldots,p-1\}.\]
In particular,
\[\reg(S/I) \le \max\{t_i + t_{p-i} - p\,|\, i = 1,\ldots,p-1\}.\]
\end{thm}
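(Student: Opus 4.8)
The plan is to derive Theorem~\ref{T1} directly from Proposition~\ref{P1} and Lemma~\ref{L2}, taking the extreme case $h = p-1$ in the Proposition. First I would dispose of trivial cases: if $p = \pd(S/I) \le 1$ there is nothing to prove, so assume $p \ge 2$. Set $B = \max\{t_i + t_{p-i} - p \,|\, i = 1,\ldots,p-1\}$. The first thing to check is that $B$ is a legitimate input for Proposition~\ref{P1} with $h = p-1$, i.e.\ that $B \ge \max\{t_i - i \,|\, i = 1,\ldots,p-1\}$. This is where the hypothesis that $I$ is an \emph{ideal} (so $t_0 = 0$) enters: for each $i$ with $1 \le i \le p-1$ we have $t_{p-i} \ge (p-i) + 1$ since $S/I$ has a nontrivial minimal syzygy in homological degree $p-i$ and $t_0 = 0 < t_{p-i}$ (more carefully, the minimal free resolution is nonzero and the degrees strictly increase, giving $t_{p-i} \ge t_0 + (p-i) = p-i$; one gets the needed slack from $\underline d_0 = 0$ together with the module being cyclic). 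Hence $t_i + t_{p-i} - p \ge t_i + (p-i) - p = t_i - i$, so $B$ dominates each $t_i - i$ as required.

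Next I would verify the hypothesis of Proposition~\ref{P1}: for all integers $r > B$ and all $s$ with $p-1 < s \le p$ — that is, only $s = p$ — one must show
\[
\beta_p(\underline d_0, t_1, \ldots, t_{p-1}, r + p)< \frac{1}{\mu(S/I)}.
\]
Since $S/I$ is cyclic, $\mu(S/I) = 1$, so this is exactly the inequality $\beta_p(0, t_1, \ldots, t_{p-1}, r+p) < 1$. Setting $r' = r + p$, the condition $r > B$ translates to $r' > \max\{t_i + t_{p-i}\,|\, i = 1, \ldots, p-1\}$, and then Lemma~\ref{L2} gives $\beta_p(0, t_1, \ldots, t_{p-1}, r') < 1$ (the degree sequence $(0,t_1,\ldots,t_{p-1},r')$ lies in the allowed range $(0,\ldots,0,r') \le \dd \le (0,t_1,\ldots,t_{p-1},r')$). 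So Proposition~\ref{P1} applies and yields $\reg(S/I) \le B = \max\{t_i + t_{p-i} - p\,|\, i=1,\ldots,p-1\}$, which is the ``in particular'' clause. The bound $t_p \le \max\{t_i + t_{p-i}\}$ then follows because $\reg(S/I) \ge t_p - p$, so $t_p - p \le B$ and hence $t_p \le B + p = \max\{t_i + t_{p-i}\}$.

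The only genuinely delicate point is the bookkeeping in the first step: making sure the chosen $B$ is both large enough to feed into Proposition~\ref{P1} (the condition $B \ge t_i - i$) and that $r > B$ really does force $r + p$ past the threshold in Lemma~\ref{L2} — the shift by $p$ has to match up on both ends, and one has to be slightly careful that $\underline d_0 = 0$ is what makes the Herzog--Kuhl numerator in Lemma~\ref{L2} equal $\prod t_i$ rather than something involving $t_i - \underline d_0$. Everything else is a direct substitution. I do not expect any obstacle beyond this indexing care; the substance has already been isolated into Proposition~\ref{P1}, Lemma~\ref{L1}, and Lemma~\ref{L2}.
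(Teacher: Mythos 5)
Your proposal is correct and is exactly the paper's argument: the paper proves Theorem~\ref{T1} by the one-line observation that $\mu(S/I)=1$ and the result follows from Lemma~\ref{L2} and Proposition~\ref{P1} with $h=p-1$; you have simply written out the bookkeeping (the choice $B=\max\{t_i+t_{p-i}-p\}$, the check $B\ge t_i-i$ via $t_{p-i}\ge p-i$, and the shift $r'=r+p$ matching the threshold in Lemma~\ref{L2}). Note only that your first claim $t_{p-i}\ge(p-i)+1$ is an overstatement — the correct and sufficient bound, which you then use, is $t_{p-i}\ge p-i$.
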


\begin{proof} Since $\mu(S/I) = 1$, this follows immediately from Lemma~\ref{L2} and Proposition~\ref{P1}.
\end{proof}

In the following section we give some examples and compare this bound with known results.  For the next result, we will need the following fact, whose proof we leave to the reader.

\begin{lem} Let $i < h$ be positive integers and let $\sigma_i(\x)$ denote the $i$th elementary symmetric polynomial in $\x = x_1, x_2,\ldots,x_h$.  Then for any nonnegative real numbers $\dd = (d_1, \ldots, d_h)$, we have
\[\sigma_i(\dd) \sigma_1(\dd) \ge \sigma_{i + 1}(\dd).\]
\end{lem}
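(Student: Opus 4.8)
The statement to prove is the elementary symmetric function inequality $\sigma_i(\dd)\,\sigma_1(\dd) \ge \sigma_{i+1}(\dd)$ for nonnegative reals $\dd = (d_1,\ldots,d_h)$ and $i < h$. The plan is to prove this by a direct combinatorial comparison of the monomials appearing on each side. First I would write both sides as sums of monomials in the $d_j$: the product $\sigma_i(\dd)\sigma_1(\dd)$ expands into a sum of terms of the form $d_k \prod_{j \in A} d_j$ where $A$ ranges over $i$-element subsets of $\{1,\ldots,h\}$ and $k$ over all of $\{1,\ldots,h\}$, while $\sigma_{i+1}(\dd)$ is the sum of squarefree monomials $\prod_{j \in B} d_j$ over $(i+1)$-element subsets $B$.

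The key step is to exhibit, for each monomial $\prod_{j \in B} d_j$ of $\sigma_{i+1}(\dd)$, at least one monomial in the expansion of $\sigma_i(\dd)\sigma_1(\dd)$ that equals it and that is used by no other $B'$. Given an $(i+1)$-set $B$, pick any element $k \in B$ and write $A = B \setminus \{k\}$; then $d_k \prod_{j \in A} d_j = \prod_{j \in B} d_j$ is exactly one of the terms in $\sigma_i(\dd)\sigma_1(\dd)$. To make this a genuine injection rather than a multiset domination argument with overcounting, I would fix a canonical choice, say $k = \min B$, so that the map $B \mapsto (A, k) = (B \setminus \{\min B\}, \min B)$ is injective into the index set of the expanded product $\sigma_i(\dd)\sigma_1(\dd)$. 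Since all the remaining terms of $\sigma_i(\dd)\sigma_1(\dd)$ are products of nonnegative reals and hence nonnegative, dropping them only decreases the sum, giving $\sigma_i(\dd)\sigma_1(\dd) \ge \sigma_{i+1}(\dd)$.

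An alternative, perhaps cleaner to write, is to count with multiplicity: every squarefree monomial $\prod_{j\in B}d_j$ with $|B| = i+1$ arises exactly $i+1$ times in the expansion of $\sigma_i(\dd)\sigma_1(\dd)$ (once for each choice of which variable came from the $\sigma_1$ factor), so $\sigma_i(\dd)\sigma_1(\dd) = (i+1)\sigma_{i+1}(\dd) + (\text{nonnegative terms})$, from which the inequality is immediate since $i+1 \ge 1$. The nonnegative leftover terms are precisely those $d_k\prod_{j\in A}d_j$ with $k \in A$, i.e. the ones producing a squared variable.

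The main obstacle is really just bookkeeping: one must be careful to verify that the "nonnegative terms" left over after matching are correctly accounted for and that the condition $i < h$ is used (it guarantees $(i+1)$-subsets exist and that $\sigma_{i+1}$ is not identically the empty sum, though the inequality holds trivially in degenerate cases too). Since the paper explicitly leaves this to the reader, I expect no conceptual difficulty — the multiplicity-counting version is short enough to state in two lines — so the only thing to get right is the claim that no cancellation can occur, which follows from nonnegativity of the $d_j$.
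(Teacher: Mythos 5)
Your proof is correct; note that the paper itself offers no proof to compare against, since it explicitly leaves this lemma to the reader. Your multiplicity-counting version is the cleanest way to write it: expanding $\sigma_i(\dd)\sigma_1(\dd)$, each squarefree monomial $\prod_{j\in B}d_j$ with $|B|=i+1$ occurs exactly $i+1$ times, and all remaining terms (those with a squared variable) are nonnegative, so in fact you obtain the stronger inequality $\sigma_i(\dd)\sigma_1(\dd)\ge (i+1)\sigma_{i+1}(\dd)$, which more than suffices for the paper's use in Lemma~\ref{L3}.
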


We now show that given as few as half of the $t_i$, we can give a bound on the regularity of $S/I$.

\begin{lem}\label{L3}
Let $I \subset S$ be a homogeneous ideal.  Set $p = \pd(S/I)$ and $t_i = t_i(S/I)$.  Suppose $h \ge \lceil \frac{p}{2} \rceil$.  For any integers $r$ and $s$ with $h \le s \le p$ and
\[r > \sum_{i = 1}^h t_i + \frac{\prod_{i = 1}^h t_i}{(h-1)!} - h,\]
we have that
\[\beta_s(0, t_1, t_2,\ldots, t_h, r+h+1, r+h+2,\ldots, r+s) < 1.\]
\end{lem}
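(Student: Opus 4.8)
The plan is to estimate $\beta_s(\dd)$ for the degree sequence $\dd = (0, t_1, \ldots, t_h, r+h+1, \ldots, r+s)$ directly from the Herzog--Kuhl formula $\beta_s(\dd) = \prod_{j=1}^{s-1} \frac{d_j - d_0}{d_s - d_j}$, and to show the product is $< 1$ under the stated lower bound on $r$. First I would write out the numerator as $\bigl(\prod_{i=1}^{h} t_i\bigr)\cdot\bigl(\prod_{k=1}^{s-h-1}(r+h+k)\bigr)$ and the denominator as $\bigl(\prod_{i=1}^{h}(r+s-t_i)\bigr)\cdot\bigl(\prod_{k=1}^{s-h-1}(r+s-(r+h+k))\bigr)$, where the second factor in the denominator telescopes to $(s-h-1)!$ after reindexing $m = s-h-k$. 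So it suffices to show
\[
\prod_{i=1}^{h} t_i \;\cdot\; \prod_{k=1}^{s-h-1}(r+h+k) \;<\; (s-h-1)! \cdot \prod_{i=1}^{h}(r+s-t_i).
\]
The key point is that $h \ge \lceil p/2 \rceil \ge \lceil s/2 \rceil$, so $s - h - 1 \le h - 1$; this means there are at least as many ``large'' factors $r + s - t_i$ in the denominator block as there are large factors $r+h+k$ in the numerator block, and the ratio of the $\prod t_i$ to the leftover $r+s-t_i$ factors must be controlled by the bound on $r$.

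The cleanest route is to pair up factors: for each $k = 1, \ldots, s-h-1$ pair $(r+h+k)$ with one of the denominator factors $(r+s-t_{i(k)})$, and bound each such ratio by $1$ — this requires $r + h + k \le r + s - t_{i(k)}$, i.e. $t_{i(k)} \le s - h - k$, which need not hold for the largest $t_i$'s. So instead I expect to proceed more globally: bound $r + h + k \le r + s$ for every $k$ (crudely), giving numerator $\le \prod_{i=1}^h t_i \cdot (r+s)^{s-h-1}$, and bound each denominator factor $r + s - t_i \ge r + s - \sum_{i=1}^h t_i$ for the $s-h-1$ of them that get paired against numerator factors; what remains is to show $\prod_{i=1}^{h} t_i < (s-h-1)!\cdot\prod_{\text{remaining } i}(r+s-t_i)$ with $h - (s-h-1) \ge 1$ remaining factors. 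The inequality $r > \sum_{i=1}^h t_i + \frac{\prod_{i=1}^h t_i}{(h-1)!} - h$ rearranges to $r + h - \sum t_i > \frac{\prod t_i}{(h-1)!}$, i.e. $(r+s-\sum t_i) \ge (r + h - \sum t_i) > \frac{\prod t_i}{(h-1)!} \ge \frac{\prod t_i}{(s-h-1)!}$ since $s - h - 1 \le h - 1$; having at least one remaining factor bounded below by $\frac{\prod t_i}{(s-h-1)!}$ and crude bounds ($\ge 1$) on the others is exactly what makes the product $> \prod t_i / (s-h-1)! \cdot (s-h-1)! \cdot \dots$ — I will need to track the bookkeeping so that the remaining $r+s-t_i$ factors and the $(s-h-1)!$ together dominate $\prod t_i$, and use the elementary symmetric polynomial inequality $\sigma_i(\dd)\sigma_1(\dd) \ge \sigma_{i+1}(\dd)$ from the preceding lemma to relate $\prod_{i=1}^h(r+s-t_i)$, expanded in the $t_i$'s, to powers of $r$ and $\sum t_i$.

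The main obstacle I anticipate is handling the case $s < p$ (i.e. $s - h$ strictly between $1$ and $h$, or even $s = h$ where the ``large'' blocks are empty) uniformly with the extremal case $s = p = 2h$, where the matching of numerator to denominator factors is tightest and no slack is available beyond what the hypothesis on $r$ provides; the crude bound $r+h+k \le r+s$ may be too lossy there and I may instead need to keep $r + h + k$ honestly paired with $r + s - t_{i}$ for a suitable permutation, then invoke the symmetric-function lemma to show $\prod_{k}(r+h+k) \cdot \prod_i t_i \le (s-h-1)! \cdot \prod_i (r+s-t_i)$ after clearing denominators and comparing coefficient-by-coefficient as polynomials in $r$. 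Once the inequality is established for all $h \le s \le p$, Lemma~\ref{L1} is not needed here since the statement already fixes the degree sequence, but the output feeds directly into Proposition~\ref{P1} (with $\mu(S/I) = 1$, $\underline{d}_0 = 0$, and $B = \sum_{i=1}^h t_i + \frac{\prod_{i=1}^h t_i}{(h-1)!} - h$) to yield Theorem~\ref{T2}.
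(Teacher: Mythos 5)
Your reduction is the same as the paper's: write $\beta_s$ of the sequence $(0,t_1,\ldots,t_h,r+h+1,\ldots,r+s)$ via the Herzog--Kuhl formula, observe that the top block of the denominator telescopes to $(s-h-1)!$, and bound each numerator factor $r+h+k$ above by $R=r+s$. But the mechanism you propose for finishing contains a direction error. From $s-h-1\le h-1$ you get $(s-h-1)!\le (h-1)!$, hence $\frac{\prod_{i=1}^h t_i}{(s-h-1)!}\ \ge\ \frac{\prod_{i=1}^h t_i}{(h-1)!}$, which is the opposite of the last step in your chain $(r+s-\sum t_i) > \frac{\prod t_i}{(h-1)!} \ge \frac{\prod t_i}{(s-h-1)!}$. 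The hypothesis on $r$ only controls the quantity $\prod t_i/(h-1)!$, i.e.\ the \emph{smaller} of the two, so it does not hand you the $(s-h-1)!$-version you need. Your surrounding bookkeeping also does not close as sketched: once each $r+h+k$ is replaced by $r+s$, those $s-h-1$ factors must be weighed against denominator factors $r+s-t_i$, and every such ratio exceeds $1$; bounding the ``remaining'' factors below by $1$ leaves that excess uncontrolled, and (as you yourself note) the slack vanishes exactly in the extremal case $s=p=2h$.

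The paper resolves the factorial mismatch in the other direction: since $R=r+s$ is much larger than $h-1$, one has $\frac{R^{s-h-1}}{(s-h-1)!}\le\frac{R^{h-1}}{(h-1)!}$ (passing from $s-h-1$ to $h-1$ multiplies the numerator by $R^{2h-s}$ while the denominator gains only the factors $(s-h)(s-h+1)\cdots(h-1)$, each at most $h-1\le R$). This reduces the lemma to the single inequality $\prod_{i=1}^h(R-d_i) > \frac{\prod_{i=1}^h d_i}{(h-1)!}\,R^{h-1}$, which the paper proves by expanding $\prod_i(R-d_i)$ in elementary symmetric polynomials: the hypothesis $R>\sum_i t_i+\frac{\prod_i t_i}{(h-1)!}$ makes $R^h-\bigl(\sigma_1+\frac{\prod d_i}{(h-1)!}\bigr)R^{h-1}$ positive, and the alternating tail $\sum_{i\ge 2}(-1)^i\sigma_i R^{h-i}$ is nonnegative by pairing consecutive terms using $R\ge\sigma_1$ and the symmetric-function inequality $\sigma_i\sigma_1\ge\sigma_{i+1}$. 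You name this expansion only as a fallback and do not carry it out, so as written the proposal does not establish the lemma; carrying out that expansion (or an equivalent argument such as $\prod_i(R-d_i)\ge R^{h-1}(R-\sum_i d_i)$) is the missing step.
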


\begin{proof} Again by Lemma~\ref{L1} it suffices to check that $\beta_s(\dd) < 1$ for 
\[\dd = (0, d_1, d_2,\ldots, d_h, r+h+1, r+h+2,\ldots, r+s).\]
Set $R = r + s$.  We have
\begin{align*}\beta_s(\dd)   &= \frac{\prod_{i = 1}^{h} d_i \prod_{j = h+1}^{s-1} (r + j)}{\prod_{i = 1}^{h} (R - d_i) \prod_{j = h+1}^{s-1} (R - (r + j))}\\
&\le  \frac{\left(\prod_{i = 1}^{h} d_i\right) R^{s - h-1}}{\left(\prod_{i = 1}^{h} (R - d_i)\right) (s - h - 1)!}\\
&\le  \frac{\left(\prod_{i = 1}^{h} d_i\right) R^{h-1}}{\left(\prod_{i = 1}^{h} (R - d_i)\right) (h - 1)!}\\
\end{align*}
where the last inequality holds because 
\[s - h - 1 \le p - h - 1 \le p - \frac{p}{2} - 1 \le \left\lceil \frac{p}{2} \right\rceil -1 = h - 1.\]

The quantity above is less then $1$ if and only if
\[\prod_{i = 1}^{h} (R - d_i) - \frac{\prod_{i = 1}^{h} d_i}{(h-1)!} R^{h-1}  > 0.\]
We rewrite this as
\[  \prod_{i = 1}^{h} (R - d_i) - \frac{\prod_{i = 1}^{h} d_i}{(h-1)!} R^{h-1}   = R^h - \left(\sum_{i = 1}^h d_i + \frac{\prod_{i = 1}^{h} d_i}{(h-1)!}\right) R^{h-1} + \sum_{i = 2}^h \sigma_i(\dd) (-1)^i R^{h-i},\]
where $\sigma_i(\x)$ denotes the $i$th elementary symmetric polynomial on $h$ variables.
Since 
\[R  = r+s > \sum_{i = 1}^h t_i + \frac{\prod_{i = 1}^h t_i}{(h-1)!},\]
it follows that 
\[R^h - \left(\sum_{i = 1}^h d_i + \frac{\prod_{i = 1}^{h} d_i}{(h-1)!}\right) R^{h-1} = R^{h-1} \left(R - \left(\sum_{i = 1}^h d_i + \frac{\prod_{i = 1}^{h} d_i}{(h-1)!}\right) \right) > 0.\]

Now fix $i$ even with $2 \le i < h$.  Then we have
\begin{align*}(-1)^i \sigma_i(\dd) R^{h-i} + (-1)^{i+1}\sigma_{i+1}(\dd) R^{h-(i+1)} &=
\sigma_i(\dd) R^{h-i} - \sigma_{i+1}(\dd) R^{h - i - 1} \\
&= R^{h-i-1} \left(\sigma_i(\dd) R - \sigma_{i+1}(\dd) \right)\\
&\ge R^{h-i-1} \left(\sigma_i(\dd) \sigma_1(\dd) - \sigma_{i+1}(\dd)\right).\\
\end{align*}
Note that the last inequality follows, since
\[R \ge \sum_{i = 1}^h d_i.\]
The final term
\[R^{h-i-1} \left(\sigma_i(\dd) \sigma_1(\dd) - \sigma_{i+1}(\dd)\right)\]
is nonnegative since 
\[\sigma_i(\dd) \sigma_1(\dd) \ge \sigma_{i+1}(\dd),\]
by the previous lemma.  Hence by pairing off terms in the sum $\sum_{i = 2}^h (-1)^i \sigma_i(\dd) R^{h-i}$ (possibly leaving the term $(-1)^h \sigma_h(\dd)$ unpaired and positive in the case where $h$ is even), we see that it is also positive, which finishes the argument.
\end{proof}

\begin{thm}\label{T2}
Let $I \subset S = K[x_1,\ldots,x_n]$ be a homogeneous ideal.  Set $h = \lceil \frac{n}{2} \rceil$ and let $t_i = t_i(S/I)$.  Then
\[\reg(S/I) \le \sum_{i = 1}^h t_i + \frac{\prod_{i = 1}^h t_i}{(h-1)!}.\]
 \end{thm}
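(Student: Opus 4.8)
The plan is to obtain Theorem~\ref{T2} by combining Lemma~\ref{L3} with Proposition~\ref{P1}, in the same way that Theorem~\ref{T1} followed from Lemma~\ref{L2} and Proposition~\ref{P1}. Concretely, I will apply Proposition~\ref{P1} to the cyclic module $M = S/I$ with the particular choice $h = \lceil n/2 \rceil$ and bound $B = \sum_{i=1}^h t_i + \frac{\prod_{i=1}^h t_i}{(h-1)!} - h$; the content of Lemma~\ref{L3} is exactly that the numerical hypothesis of Proposition~\ref{P1} holds for this $B$. So essentially all that remains is bookkeeping: check that the hypotheses of both results are satisfied for this $M$ and this $h$.

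First I would record the structural facts about $S/I$ that feed the machinery. By the Hilbert syzygy theorem, $p := \pd(S/I) \le n$, hence $h = \lceil n/2 \rceil \ge \lceil p/2 \rceil$, which is the hypothesis under which Lemma~\ref{L3} is stated. Since $S/I$ is cyclic and generated in degree $0$, we have $\mu(S/I) = \sum_j \beta_{0,j}(S/I) = \beta_{0,0}(S/I) = 1$ and $\underline{d}_0 = 0$. And because in a minimal free resolution the minimal generator degrees strictly increase, $t_i(S/I) \ge \underline{d}_i \ge i \ge 1$ for every $i$ with $1 \le i \le p$.

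Next, with $B = \sum_{i=1}^h t_i + \frac{\prod_{i=1}^h t_i}{(h-1)!} - h$, I would check the side hypothesis of Proposition~\ref{P1}, namely $B \ge \max\{t_i - i \,|\, i = 1,\ldots,h\}$: using $t_i \ge i \ge 1$ we get $\sum_{i=1}^h t_i - h = \sum_{i=1}^h (t_i - 1) \ge t_j - 1 \ge t_j - j$ for each $j \in \{1,\ldots,h\}$, while $\frac{\prod_{i=1}^h t_i}{(h-1)!} \ge 0$. With $\underline{d}_0 = 0$, $\mu(S/I) = 1$, and $h \ge \lceil p/2 \rceil$, Lemma~\ref{L3} (applied to each $s$ with $h < s \le p$) gives $\beta_s(\underline{d}_0, t_1,\ldots,t_h, r+h+1,\ldots,r+s) < 1 = \frac{1}{\mu(S/I)}$ for all $r > B$, which is precisely the hypothesis required by Proposition~\ref{P1}. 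Proposition~\ref{P1} then yields $\reg(S/I) \le B \le \sum_{i=1}^h t_i + \frac{\prod_{i=1}^h t_i}{(h-1)!}$.

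The only place where this argument does not literally go through is the degenerate range $h \ge p$, since Proposition~\ref{P1} is stated for $h < p$. There I would argue directly: every nonzero Betti number of $S/I$ lies in homological degree $\le p \le h$, so $\reg(S/I) = \max_{0 \le i \le p}\{t_i - i\} \le \max_{1 \le i \le h} t_i \le \sum_{i=1}^h t_i$, which is at most the claimed bound. I do not anticipate any real difficulty here --- the genuine inequality is Lemma~\ref{L3}, which is already proved --- so the only thing to be careful about is keeping the three quantities $\lceil n/2 \rceil$, $\lceil p/2 \rceil$, and $p$ straight, and not overlooking the $h \ge p$ boundary case.
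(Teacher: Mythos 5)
Your proposal is correct and is essentially the paper's own proof, which simply derives Theorem~\ref{T2} by feeding Lemma~\ref{L3} into Proposition~\ref{P1} for $M = S/I$ with $\mu(S/I)=1$. The extra bookkeeping you supply (checking $h=\lceil n/2\rceil \ge \lceil p/2\rceil$ via $p\le n$, verifying $B \ge \max\{t_i - i\}$, and handling the degenerate range $h \ge p$) is exactly the care the paper leaves implicit, and it all goes through as you describe.
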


\begin{proof} This follows from Lemma~\ref{L3} and Proposition~\ref{P1}.
\end{proof}

\begin{rmk} It is worth noting that the previous bound does not hold in general for non-cyclic modules.  Constructions in \cite{EFW}, \cite{ES}, and \cite{BEKS} show that for any degree sequence $\dd = (d_0, d_1,\ldots, d_s)$, there exists a Cohen-Macaulay graded $S$-module $M$ with pure free resolution with degrees corresponding to $\dd$.  Hence no matter what degrees we pick for the first half of the resolution, no bound like this is possible in general. 
\end{rmk}

\section{Examples and Questions}

Let $I \subset S = K[x_1,x_2,x_3]$ be a homogenous ideal.  Theorem~\ref{T1} shows $t_3(S/I) \le t_1(S/I) + t_2(S/I).$  Hence $\reg(S/I) \le t_i + t_{3-i} - 3$ for $i = 0,\ldots,3$.  This slightly extends Theorem~\ref{ehu1} of Eisenbud-Huneke-Ulrich \cite{EHU} in the case $\dim(S) = 3$ by removing the hypothesis on $\dim(S/I)$.  Therefore we ask the following question.

\begin{qst} Given a homogenous ideal $I \subset S = K[x_1,\ldots,x_n]$ and $0 \le i \le n$, is
\[t_n(S/I) \le t_i(S/I) + t_{n-i}(S/I)\]
without any restriction the dimension of $S/I$?  
\end{qst}

\noindent We know of no counterexamples to this question, yet the proof in \cite{EHU} seems to require the $\dim(S/I) \le 1$ hypothesis.  

One might hope that the above statement $t_3(S/I) \le t_1(S/I) + t_2(S/I)$ holds without a restriction on the number of variables.  The following example shows that this is not the case.

\begin{eg} Let $R = K[x_1,\ldots,x_8]$ and let 
\[I = (x_1^6, x_2^6,x_1 x_3^2 + x_2 x_4^2, x_1 x_5^5 + x_4 x_6^5, x_1 x_7^5 + x_6 x_8^5).\]  Using Macaulay2 \cite{M2}, we compute the Betti table of the free resolution of $R/I$ below:

\[\begin{tabular}{r|ccccccc}
      &0&1&2&3&4&5&6 \\ \hline \text{total:}&1&5&19&46&60&39&10 \\ \hline \text{0:}&1&\text{-}&\text{-}&\text{-}&\text{-}&\text{-}&\text{-}\\\text{1:}&\text{-}&\text{-}&\text
      {-}&\text{-}&\text{-}&\text{-}&\text{-}\\\text{2:}&\text{-}&1&\text{-}&\text
      {-}&\text{-}&\text{-}&\text{-}\\\text{3:}&\text{-}&\text{-}&\text{-}&\text
      {-}&\text{-}&\text{-}&\text{-}\\\text{4:}&\text{-}&\text{-}&\text{-}&\text
      {-}&\text{-}&\text{-}&\text{-}\\\text{5:}&\text{-}&4&\text{-}&\text{-}&\text
      {-}&\text{-}&\text{-}\\\text{6:}&\text{-}&\text{-}&\text{-}&\text{-}&\text
      {-}&\text{-}&\text{-}\\\text{7:}&\text{-}&\text{-}&4&\text{-}&\text{-}&\text
      {-}&\text{-}\\\text{8:}&\text{-}&\text{-}&\text{-}&\text{-}&\text{-}&\text
      {-}&\text{-}\\\text{9:}&\text{-}&\text{-}&\text{-}&\text{-}&\text{-}&\text
      {-}&\text{-}\\\text{10:}&\text{-}&\text{-}&6&\text{-}&\text{-}&\text{-}&\text
      {-}\\\text{11:}&\text{-}&\text{-}&2&4&\text{-}&\text{-}&\text{-}\\\text{12:}&
      \text{-}&\text{-}&1&6&1&\text{-}&\text{-}\\\text{13:}&\text{-}&\text{-}&1&2&1&
      \text{-}&\text{-}\\\text{14:}&\text{-}&\text{-}&2&4&1&\text{-}&\text{-}\\\text{
      15:}&\text{-}&\text{-}&1&7&3&\text{-}&\text{-}\\\text{16:}&\text{-}&\text{-}&2
      &7&10&1&\text{-}\\\text{17:}&\text{-}&\text{-}&\text{-}&2&4&2&\text{-}\\\text{
      18:}&\text{-}&\text{-}&\text{-}&2&4&2&\text{-}\\\text{19:}&\text{-}&\text{-}&
      \text{-}&3&7&3&\text{-}\\\text{20:}&\text{-}&\text{-}&\text{-}&3&8&7&1\\\text{
      21:}&\text{-}&\text{-}&\text{-}&2&6&6&2\\\text{22:}&\text{-}&\text{-}&\text
      {-}&1&4&5&2\\\text{23:}&\text{-}&\text{-}&\text{-}&1&4&5&2\\\text{24:}&\text
      {-}&\text{-}&\text{-}&1&4&5&2\\\text{25:}&\text{-}&\text{-}&\text{-}&1&3&3&1
      \\\end{tabular}
      \]
\\
\noindent Hence we have that $t_3 = 28 > 24 = 6 + 18 = t_1 + t_2$.  And $\reg(R/I) = 25 > t_1 + t_2 -  3$.  We note however that the large regularity jumps happen early in the resolution. 
\end{eg}

\begin{eg}
The examples by Caviglia show that even with only four variables, $t_2(S/I)$ can grow quadratically with respect to $t_1(S/I)$.  The Theorem~\ref{T2} with $n = 4$ and $h = 2$ shows that for any homogeneous ideal $I \subset S = K[x_1, x_2, x_3, x_4]$, we have
\[\reg(S/I) \le t_1 + t_2 + t_1\cdot t_2.\]
Hence $t_3(S/I)$ and $t_4(S/I)$ cannot grow quadratically purely in terms of $t_2$.  
\end{eg}

Finally we remark that better bounds on $\reg(S/I)$ are sometimes possible by applying Proposition~\ref{P1} to $I$ in the case where $I$ has only a few generators.   We use the case of a three-generated ideal as an example.

\begin{eg}
Let $I \subset S = K[x_1,\ldots, x_5]$ be a homogeneous ideal with three degree $11$ generators and suppose that $t_2(S/I) = 12$ and $t_3(S/I) = 13$.  Then Theorem~\ref{T2} shows that
\[
\reg(S/I) \le 11 + 12 + 13 + \frac{11\cdot 12 \cdot 13}{2} = 894.
\]
Now consider the Betti diagram of $I$ instead of $S/I$.  
Since $\dim(S) = 5$, we have $\pd(I) \le 4$.  We observe that if $r > 15$, then both
\[\beta_3(11,12,13,r+3) = \frac{1 \cdot 2}{(r - 9)(r - 10)} < \frac{1}{3},\]
and
\[\beta_4(11,12,13,r+3,r+4) = \frac{1 \cdot 2 \cdot (r - 8)}{(r - 8)(r - 9)} < \frac{1}{3}.\]
By Proposition~\ref{P1}, we have that $\reg(S/I) = \reg(I) -1 \le 15 - 1 = 14$.
Clearly this method will not work as well if $I$ has many minimal generators.
\end{eg}

We close by noting that Theorem~\ref{T2} provides a polynomial bound on the regularity of $S/I$ given half of the syzygies.  It would be interesting to know if a different polynomial bound is possible using only the first $k < \left\lceil\frac{n}{2}\right\rceil$ syzygies.  Clearly for $k = 1$ this is not possible, but it is not clear what is possible for $2 \le k < \left\lceil\frac{n}{2}\right\rceil$.

\section{Acknowledgements}

\bibliographystyle{amsplain}
\bibliography{bib}

\end{document}